\DeclareMathOperator{\Span}{span}
\theoremstyle{plain}
\newtheorem{theorem}{Theorem}[section]
\newtheorem{lemma}[theorem]{Lemma}
\newtheorem{corollary}[theorem]{Corollary}
\theoremstyle{remark}
\newtheorem{remark}[theorem]{Remark}
\newtheorem*{remark*}{Remark}
\newtheorem*{remarks*}{Remarks}
\theoremstyle{definition}
\newtheorem{definition}[theorem]{Definition}
\newtheorem*{notation*}{Notation}
\newtheorem*{example*}{Example}
\begin{document}
\title[Total torsion of lines of curvature]{Total torsion of three-dimensional\\ lines of curvature}
\author[M. Raffaelli]{Matteo Raffaelli}
\address{Institute of Discrete Mathematics and Geometry\\
TU Wien\\
Wiedner Hauptstra{\ss}e 8-10/104\\
1040 Vienna\\
Austria}
\email{matteo.raffaelli@tuwien.ac.at}
\thanks{This work was supported by Austrian Science Fund (FWF) project F~77.}
\date{August 24, 2023}
\subjclass[2020]{Primary 53A04; Secondary 53A07, 53C40}
\keywords{Darboux curvatures, parallel rotation, three-dimensional curve, total geodesic torsion}
\begin{abstract}
A curve $\gamma$ in a Riemannian manifold $M$ is \textit{three-dimensional} if its torsion (signed second curvature function) is well-defined and all higher-order curvatures vanish identically. In particular, when $\gamma$ lies on an oriented hypersurface $S$ of $M$, we say that $\gamma$ is \textit{well positioned} if the curve's principal normal, its torsion vector, and the surface normal are everywhere coplanar. Suppose that $\gamma$ is three-dimensional and closed. We show that if $\gamma$ is a well-positioned line of curvature of $S$, then its total torsion is an integer multiple of $2\pi$; and that, conversely, if the total torsion of $\gamma$ is an integer multiple of $2\pi$, then there exists an oriented hypersurface of $M$ in which $\gamma$ is a well-positioned line of curvature. Moreover, under the same assumptions, we prove that the total torsion of $\gamma$ vanishes when $S$ is convex. This extends the classical total torsion theorem for spherical curves.
\end{abstract}
\maketitle
\tableofcontents

\section{Introduction and main result}

In classical differential geometry, the \emph{total torsion theorem} states that the total torsion of a closed spherical curve vanishes; see \cite{fenchel1934, santalo1935, scherrer1940, geppert1941} and \cite[p.~170]{millman1977}.

\begin{theorem}\label{TotalTorsionTHM}
Let $I= [0, \ell]$, and let $\gamma \colon I \to \mathbb{R}^{3}$ be a smooth regular curve. If $\gamma$ is closed and $\gamma(I)\in\mathbb{S}^{2}$, then 
\begin{equation*}
\int_{0}^{\ell} \tau\,dt = 0.
\end{equation*}
\end{theorem}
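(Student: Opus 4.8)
The plan is to work in the unit-speed setting, placing the center of the sphere at the origin so that we may assume $\langle \gamma, \gamma\rangle \equiv 1$, and to read off the torsion from the motion of the Frenet frame relative to the position vector $\gamma$. First I would establish that a spherical curve has strictly positive curvature, so that the Frenet apparatus is available: differentiating $\langle \gamma, \gamma\rangle = 1$ gives $\langle \gamma, \gamma'\rangle = \langle \gamma, T\rangle = 0$, and differentiating once more yields $\langle \gamma, \gamma''\rangle = -1$, whence by Cauchy--Schwarz $\kappa = |\gamma''| \geq 1 > 0$. Consequently the Frenet frame $(T, N, B)$ is globally defined and, since $\gamma$ is closed, smooth and $\ell$-periodic. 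Moreover $\langle\gamma,T\rangle = 0$ shows that the position vector lies in the normal plane, so we may write $\gamma = a\,N + b\,B$ with $a^2 + b^2 = 1$.

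Next I would introduce an angle function and differentiate. Writing $\gamma = \cos\theta\, N + \sin\theta\, B$ for a smooth function $\theta$, substituting the Frenet--Serret equations $N' = -\kappa T + \tau B$ and $B' = -\tau N$, and comparing with $\gamma' = T$ component by component in the frame, the $T$-component gives $\cos\theta = -1/\kappa$, while the $B$-component gives $\cos\theta\,(\theta' + \tau) = 0$. Since $\cos\theta = -1/\kappa \neq 0$, this forces $\tau = -\theta'$, so that
\[
\int_0^\ell \tau\,dt = \theta(0) - \theta(\ell).
\]

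It remains to evaluate this boundary term, and this is the only delicate point. Because the Frenet frame and $\gamma$ are all $\ell$-periodic, the pair $(\cos\theta, \sin\theta)$ returns to its initial value, so a priori only $\theta(\ell) - \theta(0) \in 2\pi\mathbb{Z}$ — which by itself would merely show that the total torsion is an integer multiple of $2\pi$. To upgrade this to the sharp value $0$, I would invoke the identity $\cos\theta = -1/\kappa$: since $\kappa > 0$ we have $\cos\theta < 0$ throughout, so the continuous lift $\theta$ is trapped in a single interval $(\pi/2 + 2\pi n,\, 3\pi/2 + 2\pi n)$ of length $\pi$. A continuous function confined to such an interval cannot change by a nonzero multiple of $2\pi$, hence $\theta(\ell) = \theta(0)$ and the integral vanishes.

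The main obstacle is precisely this last step: the frame-rotation computation naturally produces only a statement modulo $2\pi$, and the geometric fact that the center of the sphere always lies on the concave side of the osculating plane — encoded analytically in $\cos\theta = -1/\kappa < 0$ — is exactly what pins the winding number of $\theta$ to zero. Everything preceding it is a routine manipulation of the Frenet equations.
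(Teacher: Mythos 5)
Your proof is correct, and it is essentially the paper's own argument specialized to the unit sphere: the paper treats Theorem~\ref{TotalTorsionTHM} as the spherical case of Theorem~\ref{mainCOR}, whose proof in Section~\ref{3DCurves} runs exactly as yours does --- the angle $\theta$ between the principal normal and the surface normal satisfies $\pm\int_0^\ell \tau\,dt = \theta(\ell)-\theta(0) \in 2\pi\mathbb{Z}$ by closedness, and the convexity relation $\kappa_n = \kappa\cos(\theta)$ (your $\cos\theta = -1/\kappa$, with the sign flipped only because you use the outward rather than inward normal) pins the multiple to zero. So the decomposition $\gamma = \cos\theta\, N + \sin\theta\, B$, the identity $\tau = -\theta'$, and the sign-of-$\cos\theta$ argument are the same three steps as in the paper, just carried out with the classical Frenet--Serret equations instead of the paper's rotation-of-the-normal machinery.
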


Theorem~\ref{TotalTorsionTHM} manifests the fact that ``the torsion of a closed curve lying on a surface in $\mathbb{R}^{3}$ is somehow constrained by the geometry of [the] surface" \cite[p.~111]{pansonato2008}; see, e.g., \cite{sedykh1994, costa1997, ghomi2017, ghomi2019} for further evidence of the same fact.

Closely related to Theorem~\ref{TotalTorsionTHM} is the following result of Qin and Li.

\begin{theorem}[\cite{qin2002}]\label{surfaceTHM}
Let $S$ be a (smooth) oriented surface in $\mathbb{R}^{3}$. If $\gamma$ is a closed line of curvature of $S$, then the total torsion is an integer multiple of $2\pi$. Conversely, if the total torsion of a closed curve in $\mathbb{R}^{3}$ is an integer multiple of $2\pi$, then it can appear as a line of curvature of an oriented surface.
\end{theorem}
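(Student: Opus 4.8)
The plan is to translate the line-of-curvature condition into the language of adapted frames. Assume $\gamma$ is parametrized by arc length with nonvanishing curvature, so that its Frenet frame $(T,n,b)$ and torsion $\tau$ are defined. Along $S$ one also has the Darboux frame $(T,U,N)$, where $N$ is the unit surface normal and $U=N\times T$; it obeys
\[
T'=\kappa_g U+\kappa_n N,\qquad U'=-\kappa_g T+\tau_g N,\qquad N'=-\kappa_n T-\tau_g U,
\]
and $\gamma$ is a line of curvature precisely when the geodesic torsion $\tau_g$ vanishes identically (equivalently, $N'$ is everywhere parallel to $T$). Since $(U,N)$ and $(n,b)$ are two positively oriented orthonormal bases of the normal plane, they differ by a rotation through an angle $\psi(s)$, say $n=\cos\psi\,U+\sin\psi\,N$ and $b=-\sin\psi\,U+\cos\psi\,N$. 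Differentiating $b$ and substituting the Darboux equations yields $b'=-(\psi'+\tau_g)\,n$, whence, comparing with $b'=-\tau n$,
\[
\tau=\tau_g+\psi'.
\]

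For the direct implication, suppose $\gamma$ is a closed line of curvature. Then $\tau_g\equiv 0$, so $\tau=\psi'$ and $\int_0^\ell \tau\,ds=\psi(\ell)-\psi(0)$. Because $\gamma$ is closed and $S$ is smooth, both the Frenet frame and the Darboux frame are $\ell$-periodic; hence the rotation angle $\psi$ returns to its initial value modulo $2\pi$, and the total torsion is an integer multiple of $2\pi$.

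For the converse, let $\gamma$ be a closed curve with $\int_0^\ell\tau\,ds=2\pi k$, $k\in\mathbb{Z}$. First I would manufacture a surface normal along $\gamma$ by reversing the computation above: set $\psi(s)=\int_0^s\tau\,dt$ and $N(s)=\sin\psi(s)\,n(s)+\cos\psi(s)\,b(s)$. By hypothesis $\psi(\ell)-\psi(0)\in 2\pi\mathbb{Z}$, so $N$ is a smooth $\ell$-periodic unit field orthogonal to $T$, and by construction $\psi'=\tau$, i.e.\ the associated geodesic torsion vanishes. Putting $U=N\times T$, I would then realize $\gamma$ as a line of curvature of the ruled strip
\[
X(s,v)=\gamma(s)+v\,U(s),\qquad (s,v)\in\mathbb{S}^1\times(-\varepsilon,\varepsilon).
\]
At $v=0$ one has $X_s=T$ and $X_v=U$, so $X$ is an immersion for small $\varepsilon$, it is orientable with unit normal $T\times U=N$ along $\gamma$, and its Gauss map satisfies $dN(T)=N'=-\kappa_n T$ there; thus $T$ is a principal direction, and $\gamma$ is a line of curvature of $X$.

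The crux is the frame comparison yielding $\tau=\tau_g+\psi'$, which must be executed with consistent orientation conventions; the remaining steps—periodicity, regularity of the strip, and identification of the induced normal—are then routine. The genuine difficulty, and the point the paper's machinery of Darboux curvatures and parallel rotation is designed to overcome, arises at points where $\kappa=0$, since there the Frenet frame and hence $\psi$ are undefined. I would circumvent this by replacing $(n,b)$ with a globally defined parallel (rotation-minimizing) normal frame: the condition $\tau_g=0$ is exactly that $N$ be parallel in the normal bundle of $\gamma$, so that the total torsion equals the holonomy angle of the normal connection, and the theorem reduces to the statement that a closed parallel normal field exists if and only if that holonomy is an integer multiple of $2\pi$.
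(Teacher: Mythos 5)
Your proof is correct, and it is essentially the approach the paper takes: the paper cites this theorem from Qin--Li but proves its generalization (Theorem~\ref{mainTHM}) by exactly your mechanism, with your identity $\tau=\tau_g+\psi'$ appearing as the relation $\theta'=\pm\tau_g(\theta)$ for the rotation angle between $N_S$ and a parallel (rotation-minimizing) normal, and your ruled strip $\gamma+vU$ being the Euclidean specialization of the paper's exponential-map construction $\sigma(t,u)=\exp_{\gamma(t)}(u^{1}H_{1}(\theta)+\dotsb)$. Your closing observation—that the real content is the holonomy of the normal connection, and that the Frenet frame should be replaced by a parallel frame to handle $\kappa=0$—is precisely the viewpoint the paper's machinery of parallel rotations formalizes.
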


Theorem~\ref{TotalTorsionTHM} and the first part of Theorem~\ref{surfaceTHM} have been generalized to three-dimensional orientable Riemannian manifolds of constant curvature $M^{3}_{c}$ \cite{pansonato2008}; see also \cite{daSilva2020, yin2017} for related results. In the present note we shall see that, under suitable assumptions, both theorems remain valid when $M^{3}_{c}$ is replaced by an arbitrary Riemannian manifold $M^{m}\equiv M$, provided one restricts the attention to \emph{three-dimensional} curves; roughly speaking, a curve in $M$ is three-dimensional if it has one curvature and one ``torsion", all other curvature functions being zero. As we explain below, in that case one should interpret ``torsion" as a signed version of Spivak's ``second curvature function" \cite[p.~22]{spivak1999d}. 

Let $\gamma$ be a unit-speed curve $I \to M$, let $N$ be a unit normal vector field along $\gamma$, and let $\pi_{\mathcal{H}}$ be the orthogonal projection onto $\mathcal{H} = (\gamma' \oplus N)^{\perp}$. We say that $N$ is \textit{torsion-defining} if there exists a smooth unit vector field $W(N)$ along $\gamma$ that is everywhere parallel to $T_{g} = -\pi_{\mathcal{H}}D_{t}N$. If $N$ is torsion-defining, then the function $\tau_{g} = \langle T_{g}, W(N)\rangle$ is called the \textit{(first) geodesic torsion of $\gamma$ with respect to $N$}. In particular, if $D_{t}\gamma'$ is never zero, then the geodesic torsion of $\gamma$ with respect to the principal normal $P = D_{t}\gamma'/\kappa$ is called the \textit{(first) torsion of $\gamma$}, and $\gamma$ is said to be a \textit{Frenet curve}.

The logic behind our terminology is the following. In the same way a generic curve in $\mathbb{R}^{3}$ has one (unsigned) curvature plus one (signed) torsion, a generic curve in $M$ may have one (unsigned) curvature plus $m-2$ (signed) torsions; cf.\ \cite{spivak1999d}. On the other hand, since we never deal with higher-order torsions, we typically speak of ``torsion" as a shorthand for ``first torsion".

Now, to state our generalization of Theorems \ref{TotalTorsionTHM} and \ref{surfaceTHM}, let $S$ be an oriented hypersurface of $M$, and let $N_S$ be its unit normal. A Frenet curve on $S$ is said to be \textit{well positioned} if $N_{S}$, $P$, and $W(P)$ are everywhere coplanar.

\begin{theorem}\label{mainCOR}
Suppose that $\gamma$ is three-dimensional, i.e., that $\gamma$ is a Frenet curve such that $W(P)$ is parallel in $\mathcal{H}(P)$; see Definition~\textup{\ref{3DcurveDEF}}. If $\gamma$ is a well-positioned closed line of curvature of $S$, then the total torsion of $\gamma$ is an integer multiple of $2\pi$; in particular, the total torsion vanishes when $S$ is convex, i.e., when the second fundamental form of $S$ is positive definite. Conversely, if $\gamma$ is open, then there exists an orientable hypersurface in which $\gamma$ is a well-positioned line of curvature; if $\gamma$ is closed, then the same holds provided the total torsion of $\gamma$ is an integer multiple of $2\pi$.
\end{theorem}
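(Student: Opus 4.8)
The plan is to convert the whole statement into one scalar identity linking the torsion $\tau$ to the rate at which the surface normal rotates inside the plane $\mathrm{span}\{P,W(P)\}$. First I would write down the Frenet-type equations of a three-dimensional curve: since $W(P)$ is parallel in $\mathcal{H}(P)=(\gamma'\oplus P)^{\perp}$, differentiating the orthonormality relations among $\gamma'$, $P$, and $W(P)$ forces
\begin{equation*}
D_{t}\gamma'=\kappa P,\qquad D_{t}P=-\kappa\gamma'-\tau W(P),\qquad D_{t}W(P)=\tau P,
\end{equation*}
exactly as in $\mathbb{R}^{3}$. Because $\gamma$ is well positioned and $\gamma'$ is tangent to $S$, the normal $N_{S}$ lies in $\mathrm{span}\{P,W(P)\}$, so I may write $N_{S}=\cos\phi\,P+\sin\phi\,W(P)$ for a continuous angle function $\phi$, with $N_{S}^{\perp}=-\sin\phi\,P+\cos\phi\,W(P)$ completing an orthonormal frame of that plane.

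Next I would differentiate this expression using the Frenet equations; a short computation gives
\begin{equation*}
D_{t}N_{S}=-\kappa\cos\phi\,\gamma'-(\tau-\phi')\,N_{S}^{\perp}.
\end{equation*}
Since $N_{S}^{\perp}$ is orthogonal to both $\gamma'$ and $N_{S}$, it lies in $\mathcal{H}(N_{S})$, so the geodesic-torsion vector is $T_{g}=-\pi_{\mathcal{H}(N_{S})}D_{t}N_{S}=(\tau-\phi')N_{S}^{\perp}$ and hence $|\tau_{g}|=|\tau-\phi'|$. The key observation is that $\gamma$ is a line of curvature if and only if $T_{g}=0$: because $S$ is a hypersurface, $D_{t}N_{S}$ is automatically tangent to $S$ and records the action of the shape operator on $\gamma'$; hence $\gamma'$ is a principal direction precisely when $D_{t}N_{S}\parallel\gamma'$, i.e. when the $\mathcal{H}(N_{S})$-component of $D_{t}N_{S}$ vanishes. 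Combining, a well-positioned line of curvature satisfies $\tau=\phi'$.

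From here the direct statement is immediate: integrating $\tau=\phi'$ gives $\int_{0}^{\ell}\tau\,dt=\phi(\ell)-\phi(0)$, and since $N_{S}$, $P$, and $W(P)$ are all globally defined and periodic along the closed curve, $\phi(\ell)-\phi(0)\in 2\pi\mathbb{Z}$. For the convex case I would use that the normal curvature in the direction $\gamma'$ equals $\kappa\cos\phi$; positive definiteness of the second fundamental form forces $\cos\phi>0$ throughout, so the continuous lift $\phi$ stays inside a single interval of length $\pi$ on which the cosine is positive. A net change of $2\pi$ is then impossible, whence $\phi(\ell)=\phi(0)$ and the total torsion vanishes---this recovers Theorem~\ref{TotalTorsionTHM} when $S$ is a round sphere.

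For the converse I would run the construction backwards. Set $\phi(t)=\phi_{0}+\int_{0}^{t}\tau\,ds$ and define $N_{S}=\cos\phi\,P+\sin\phi\,W(P)$ along $\gamma$; when $\gamma$ is closed the hypothesis $\int_{0}^{\ell}\tau\,dt\in 2\pi\mathbb{Z}$ is exactly what makes $N_{S}$ periodic, while for open $\gamma$ no condition is needed. I would then take $S$ to be the image of $F(t,v)=\exp_{\gamma(t)}(v)$ with $v$ ranging over the $(m-2)$-dimensional space $\mathcal{H}(N_{S})(t)=\{\gamma'(t),N_{S}(t)\}^{\perp}$ and $v$ small. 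At $v=0$ the differential $dF$ sends $\partial_{t}$ to $\gamma'$ and the remaining directions identically onto $\mathcal{H}(N_{S})$, so $F$ is an immersion near $\gamma$ with $T_{\gamma(t)}S=N_{S}(t)^{\perp}$; hence $N_{S}$ is the unit normal of an oriented hypersurface $S$. By construction $N_{S}\in\mathrm{span}\{P,W(P)\}$, so $\gamma$ is well positioned, and $\phi'=\tau$ forces $T_{g}=0$, so $\gamma$ is a line of curvature. I expect the main obstacle to be precisely this converse: verifying that $F$ is a genuine immersion onto a smooth hypersurface, and that the line-of-curvature property---a priori a statement about all of $S$---is determined solely by the prescribed $1$-jet of $N_{S}$ along $\gamma$, so that nothing about the extension of $S$ away from $\gamma$ enters the computation of $D_{t}N_{S}$.
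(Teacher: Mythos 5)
Your proposal is correct and takes essentially the same route as the paper: your identity $\tau=\phi'$ for a well-positioned line of curvature is exactly what the paper obtains by viewing $P$ as a parallel rotation of $N_{S}$ and invoking Lemma~\ref{FrameRotationLM} in the proof of Theorem~\ref{mainTHM}, your convexity argument via $\kappa_{n}=\kappa\cos\phi>0$ matches the paper's, and your exponential-map construction for the converse is the paper's map $\sigma$. The ``main obstacle'' you flag is in fact not one: the shape operator applied to $\gamma'$ equals $-D_{t}(N_{S}\circ\gamma)$, a covariant derivative along the curve, so the line-of-curvature condition depends only on the restriction of the unit normal to $\gamma$, which your construction prescribes.
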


Clearly, when $\dim M = 3$, every Frenet curve is three-dimensional, and every Frenet curve on $S$ is well positioned. Specializing the theorem to that case, we may state the following result.

\begin{corollary}
Suppose that $\dim M = 3$ and that $\gamma$ is a closed Frenet curve. If $\gamma$ is a line of curvature of $S$, then the total torsion of $\gamma$ is an integer multiple of $2\pi$; in particular, the total torsion vanishes when $S$ is convex. Conversely, if the total torsion of $\gamma$ is an integer multiple of $2\pi$, then there exists an orientable surface in which $\gamma$ is a line of curvature.
\end{corollary}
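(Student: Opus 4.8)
The plan is to obtain the corollary as a direct specialization of Theorem~\ref{mainCOR}. Since a closed Frenet curve is assumed, it suffices to confirm the two assertions made in the sentence preceding the statement: that when $\dim M = 3$ every Frenet curve is three-dimensional, and every Frenet curve on $S$ is well positioned. Once these are in place, both the forward implication (total torsion an integer multiple of $2\pi$, vanishing in the convex case) and the converse are inherited verbatim from Theorem~\ref{mainCOR}, with no further argument needed.

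To see that $\gamma$ is three-dimensional, I would simply count dimensions. The horizontal distribution $\mathcal{H}(P) = (\gamma' \oplus P)^{\perp}$ has rank $\dim M - 2 = 1$, so it is a line bundle along $\gamma$ carrying the induced metric connection. Any unit section $W(P)$ of such a bundle satisfies $\langle D_{t}W(P), W(P)\rangle = \tfrac{1}{2}\partial_{t}|W(P)|^{2} = 0$, whence the $\mathcal{H}(P)$-component of $D_{t}W(P)$ vanishes identically. Thus $W(P)$ is parallel in $\mathcal{H}(P)$, which is precisely the condition of Definition~\ref{3DcurveDEF}.

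For the well-positioned property, I would again argue by dimension. The three vectors $N_{S}$, $P$, and $W(P)$ all lie in the normal space $(\gamma')^{\perp}$, which is two-dimensional because $\dim M = 3$; any three vectors in a plane are automatically coplanar, so the coplanarity requirement holds trivially for every Frenet curve on $S$. Applying Theorem~\ref{mainCOR} then yields the corollary. I do not anticipate any genuine obstacle: the mathematical content resides entirely in Theorem~\ref{mainCOR}, and the corollary amounts to observing that its two structural hypotheses become vacuous once the ambient dimension drops to three.
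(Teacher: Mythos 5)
Your proposal is correct and follows exactly the paper's route: the paper derives the corollary by specializing Theorem~\ref{mainCOR}, asserting as ``clear'' that in dimension three every Frenet curve is three-dimensional and every Frenet curve on $S$ is well positioned. Your dimension-counting arguments (the rank-one bundle $\mathcal{H}(P)$ forcing $\pi_{\mathcal{H}(P)}D_{t}W(P)=\langle D_{t}W(P),W(P)\rangle W(P)=0$, and the coplanarity of $N_{S}$, $P$, $W(P)$ inside the two-dimensional normal plane $(\gamma')^{\perp}$) correctly supply the details the paper leaves implicit.
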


\begin{remark}
If $\dim M = 3$, then every regular curve with nonvanishing curvature is a Frenet curve.
\end{remark}

We will obtain Theorem~\ref{mainCOR} as a corollary of a more general statement involving the geodesic torsion of $\gamma$ with respect to an arbitrary unit normal vector field $N$ along $\gamma$, in which the assumption that $\gamma$ is three-dimensional is replaced by the condition that $N$ is a \emph{parallel rotation} of $N_{S}$.

Let $N$ and $Z$ be unit normal vector fields along $\gamma$. We say that $Z$ is a \textit{rotation of $N$} if there exists a continuous unit vector field $H(N, Z) \equiv H$ such that
\begin{enumerate}
\item $\langle H, \gamma' \rangle = \langle H, N \rangle = 0$, i.e., $H \in \Gamma(\mathcal{H})$;
\item $H$, $N$, and $Z$ are everywhere linearly dependent.
\end{enumerate}
Clearly, if $N \wedge Z$ is nowhere zero, then the vector field $H$ is defined up to a sign.

Now, suppose that $Z$ is a rotation of $N$. Then we can write
\begin{equation*}
Z \equiv N(\theta)= - \sin(\theta) H + \cos(\theta) N\
\end{equation*}
for some continuous function $\theta \colon I \to \mathbb{R}$. 

\begin{definition}
A rotation of $N$ is said to be \textit{parallel} if $H$ is parallel with respect to the induced connection on $\mathcal{H}$, and \textit{closed} if $\theta(\ell)-\theta(0) = 2n\pi$ for some $n \in \mathbb{Z}$.
\end{definition}

\begin{remark}
\leavevmode
\begin{enumerate}
\item If $\dim M = 3$, then any unit normal vector field along $\gamma$ is a parallel rotation of $N$.
\item If $\gamma$ is closed, then so is any rotation of $N$.
\end{enumerate}
\end{remark}

\begin{theorem}\label{mainTHM}
If $\gamma$ is a line of curvature of $S$, then the total geodesic torsion of $\gamma$ with respect to any closed parallel rotation of $N_{S}$ is an integer multiple of $2\pi$.
Conversely, suppose that $N$ is torsion-defining and that $W(N)$ is parallel in $\mathcal{H}$. If $\gamma$ is open, then there exists an orientable hypersurface of $M$ in which $\gamma$ is a line of curvature; if $\gamma$ is closed, then the same holds provided the total geodesic torsion of $\gamma$ with respect to $N$ is an integer multiple of $2\pi$.
\end{theorem}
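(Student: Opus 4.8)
\emph{Overview.} The two directions rest on a single computation: how the geodesic-torsion vector $T_g=-\pi_{\mathcal{H}}D_t(\cdot)$ transforms under a parallel rotation. The plan is to show that if $N$ is torsion-defining with $W(N)=W_0$ parallel in $\mathcal{H}$ and geodesic torsion $\tau_g$, and if
\begin{equation*}
\tilde N = N(\psi)=-\sin\psi\,W_0+\cos\psi\,N,\qquad \tilde W=\cos\psi\,W_0+\sin\psi\,N\in\Gamma(\mathcal{H}(\tilde N)),
\end{equation*}
is the parallel rotation about $W_0$, then
\begin{equation*}
T_g(\tilde N)=(\psi'+\tau_g)\,\tilde W,\qquad\text{so that}\qquad \tau_g(\tilde N)=\psi'+\tau_g .
\end{equation*}
To derive this I would differentiate $\tilde N$ and use two facts: first, that $W_0$ parallel forces $D_tW_0\in\Span(\gamma',N)$ with $\langle D_tW_0,N\rangle=-\langle W_0,D_tN\rangle=\tau_g$; second, that $\pi_{\mathcal{H}}D_tN=-\tau_g W_0$. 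Rewriting $N$ and $W_0$ in the orthonormal pair $\{\tilde N,\tilde W\}$, the part of $D_t\tilde N$ orthogonal to $\gamma'$ collapses to $-(\psi'+\tau_g)\tilde W$, and projecting onto $\mathcal{H}(\tilde N)$ (which contains $\tilde W$ and kills $\gamma'$) gives the formula.

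\emph{Forward direction.} Taking $N=N_S$, the identity $\langle D_tN_S,N_S\rangle=0$ shows that $D_tN_S$ is tangent to $S$ and equals $-A(\gamma')$, where $A$ is the shape operator; the line-of-curvature hypothesis $A(\gamma')=\lambda\gamma'$ then gives $D_tN_S=-\lambda\gamma'$, hence $\pi_{\mathcal{H}}D_tN_S=0$ and $\tau_g(N_S)=0$. The same computation (now valid for a rotation about \emph{any} parallel axis, since $T_g(N_S)=0$) yields $\tau_g=\theta'$ for a closed parallel rotation $Z=N_S(\theta)$, so
\begin{equation*}
\int_0^\ell\tau_g\,dt=\theta(\ell)-\theta(0)=2n\pi
\end{equation*}
by closedness. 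The crux is the cancellation $\pi_{\mathcal{H}}D_tN_S=0$ forced by the line-of-curvature condition.

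\emph{Converse.} Given $N$ torsion-defining with $W_0=W(N)$ parallel, I would choose $\psi$ with $\psi'=-\tau_g$, so that $\tau_g(\tilde N)=0$, i.e.\ $T_g(\tilde N)=0$ and $D_t\tilde N\parallel\gamma'$. For open $\gamma$ such a $\psi$ exists unconditionally; for closed $\gamma$, the field $\tilde N$ is single-valued precisely when $\psi(\ell)-\psi(0)=-\int_0^\ell\tau_g\,dt\in2\pi\mathbb{Z}$, which is exactly the hypothesis that the total geodesic torsion be an integer multiple of $2\pi$. It then remains to realize $\tilde N$ as the unit normal of a genuine hypersurface. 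I would set
\begin{equation*}
F(t,v)=\exp_{\gamma(t)}(v),\qquad v\in\mathcal{H}(\tilde N)_t,\ |v|<\varepsilon,
\end{equation*}
defined on the $\varepsilon$-disk bundle of $\mathcal{H}(\tilde N)\subset\gamma^{*}TM$; then $F$ is an immersion along the zero section with $F(t,0)=\gamma(t)$ and $T_{\gamma(t)}S=\gamma'(t)\oplus\mathcal{H}(\tilde N)_t=\tilde N(t)^{\perp}$, so $\tilde N$ is the unit normal along $\gamma$. Since $A(\gamma')=-D_t\tilde N$ is a multiple of $\gamma'$, the curve $\gamma$ is automatically a line of curvature, \emph{independently of how $S$ is extended off $\gamma$}.

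\emph{Main obstacle.} The delicate step is this last geometric realization in the closed case: one must check that $F$ is an embedding (not merely an immersion) for small $\varepsilon$, and that $S$ is orientable. For the latter I would argue that $\Span(\gamma')$ and $\Span(\tilde N)$ are trivial subbundles of $\gamma^{*}TM$, so that $\mathcal{H}(\tilde N)$—and hence the total space carrying $S$—inherits orientability, while the globally defined normal $\tilde N$ supplies the orientation. Everything else is the bookkeeping of the two covariant-derivative identities feeding the transformation rule.
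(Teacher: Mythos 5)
Your proposal is correct and follows essentially the same route as the paper: your transformation rule $T_{g}(\tilde N)=(\psi'+\tau_{g})\,\tilde W$ is precisely the special case of Lemma~\ref{FrameRotationLM} in which the rotation axis $H_{1}$ is parallel in $\mathcal{H}$ (so every $\mu_{j}=0$) and is either the torsion direction $W(N)$ or an arbitrary axis with $T_{g}=0$, and your disk-bundle map $F(t,v)=\exp_{\gamma(t)}(v)$, $v\in\mathcal{H}(\tilde N)_{t}$, is the paper's immersion $\sigma$ written frame-independently, with the same integration of $\theta'$ giving the forward direction and the same choice $\psi'=-\tau_{g}$ giving the converse. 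The only divergence is cosmetic: the paper is content with an immersion near the zero section (so your concern about $F$ being an embedding is not needed), and its orientability claim rests, like yours, on the implicit triviality of $\mathcal{H}$ along $\gamma$.
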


\begin{remark}
It follows from section \ref{RotatingNormal} that if $\gamma$ is a line of curvature of $S$ and $P$ is a parallel rotation of $N_{S}$, then $\gamma$ is three-dimensional.
\end{remark}

The remainder of the paper is organized a follows. In section~\ref{Preliminaries} we set up some notations. In section~\ref{DarbouxCurvatures} we generalize the well-known concepts of geodesic curvature, normal curvature, and geodesic torsion of a curve on a surface in $\mathbb{R}^{3}$ to a curve on a hypersurface of $M$; although, under reasonable assumptions, one may define $m-2$ geodesic curvatures and geodesic torsions, for the sake of simplicity we shall limit ourselves to first-order curvatures. In section~\ref{RotatingNormal} we obtain formulas expressing the curvature vectors of $\gamma$ with respect to a rotation of $N$ in terms of the rotation angle. Finally, in sections \ref{ProofTheorem} and \ref{3DCurves} we prove Theorems \ref{mainTHM} and \ref{mainCOR}, respectively.

\section{Preliminaries}\label{Preliminaries}
In this section we discuss some preliminaries.

Let $M$ be an $m$-dimensional Riemannian manifold, let $\gamma$ be a smooth unit-speed curve $I \to M$, and let $TM \rvert_{\gamma}$ be the ambient tangent bundle over $\gamma$. Recall that
\begin{equation*}
TM\rvert_{\gamma} = \bigsqcup_{t \in  I} T_{\gamma(t)}M.
\end{equation*}
We define a \textit{distribution of rank $r$ along $\gamma$} to be a rank-$r$ subbundle of $TM \rvert_{\gamma}$.

Let $\mathcal{D}$ be a distribution of rank $r$ along $\gamma$, and let $\mathcal{D}^{\perp}$ be the distribution of rank $m-r$ along $\gamma$ whose fiber at $t$ is the orthogonal complement $\mathcal{D}_{t}^{\perp}$ of $\mathcal{D}_{t}$ in $T_{\gamma(t)}M$, so that $TM\lvert_{\gamma}$ splits as
\begin{equation*}
TM\rvert_{\gamma} = \mathcal{D} \oplus  \mathcal{D}^{\perp};
\end{equation*}
accordingly, we write
\begin{equation*}
X = X^{v} + X^{h}
\end{equation*}
for any vector field $X$ along $\gamma$. 

In this setting, the \textit{tangential projection} is the map $\pi_{\mathcal{D}} \colon \Gamma(TM\rvert_{\gamma}) \to \Gamma(\mathcal{D})$ given by
\begin{equation*}
	X \mapsto X^{v}.
\end{equation*}
Likewise, the \textit{normal projection} is the map $\pi^{\perp}_{\mathcal{D}} \colon \Gamma(TM\rvert_{\gamma}) \to \Gamma(\mathcal{D}^{\perp})$ sending each $X$ to the corresponding $X^{h}$.

\section{Darboux curvatures and curvature vectors}\label{DarbouxCurvatures}
The purpose of this section is to extend the classical notions of geodesic curvature, normal curvature, and geodesic torsion of a curve on a surface in $\mathbb{R}^{3}$ to a curve on a hypersurface of $M$.

Let $\gamma$ be a (smooth) unit-speed curve $I \to M$, let $N$ be a unit normal vector field along $\gamma$, and let $\mathcal{H}(N) \equiv \mathcal{H}$ be the distribution of rank $m-2$ along $\gamma$ whose fiber at $t$ is the orthogonal complement of $E(t) = \gamma'(t)$ and $N(t)$ in $T_{\gamma(t)}M$. Denoting by $D_{t}$ the covariant derivative along $\gamma$, we define
\begin{itemize}
\item the \textit{(first) geodesic curvature vector $K_{g}$ of $\gamma$ with respect to $N$} by
\begin{equation*}
K_{g} = \pi_{\mathcal{H}} D_{t}E;
\end{equation*}
\item the \textit{normal curvature vector $K_{n}$ of $\gamma$ with respect to $N$} by 
\begin{equation*}
K_{n} = \pi_{\mathcal{N}} D_{t}E, 
\end{equation*}
where $\mathcal{N} = \Span N$;
\item the \textit{(first) geodesic torsion vector $T_{g}$ of $\gamma$ with respect to $N$} by 
\begin{equation*}
T_{g} = -\pi_{\mathcal{H}} D_{t}N.
\end{equation*}
\end{itemize}

To express these vector fields in coordinates, let $(H_{1}, \dotsc, H_{m-2})$ be a smooth orthonormal frame for $\mathcal{H}$. Then there are functions $\kappa_{g}^{1}, \dotsc, \kappa_{g}^{m-2}$, $\kappa_{n}$, and $\tau_{g}^{1}, \dotsc, \tau_{g}^{m-2}$ such that
\begin{align*}
K_{g} &= \kappa_{g}^{1}H_{1} + \dotsb + \kappa_{g}^{m-2} H_{m-2},\\
K_{n} &= \kappa_{n} N,\\
T_{g} &= \tau_{g}^{1}H_{1} + \dotsb + \tau_{g}^{m-2} H_{m-2}.
\end{align*}

Note that, since $(E, H_{1}, \dotsc, H_{m-2}, N)$ is orthonormal, the following equations hold for all $j =1, \dotsc, m-2$:
\begin{align*}
D_{t}E &= \kappa_{g}^{1}H_{1} + \dotsb + \kappa_{g}^{m-2}H_{m-2} + \kappa_{n}N,\\
D_{t}H_{j} &= -\kappa_{g}^{j}E + \tau_{g}^{j}N + \pi_{\mathcal{H}} D_{t}H_{j},\\
D_{t}N &= -\kappa_{n}E - \tau_{g}^{1}H_{1} - \dotsb - \tau_{g}^{m-2} H_{m-2}.
\end{align*}

The curvature vectors allow us to define corresponding curvature \emph{functions}. In one case, the definition is trivial: the function $\kappa_{n} = \langle D_{t}E, N \rangle$ is called the \textit{normal curvature of $\gamma$ with respect to $N$}. For the remaining two cases, we proceed as follows.

We say that $N$ is \textit{curvature-defining} if there exists a smooth unit vector field $V(N) \equiv V$ along $\gamma$ that is everywhere parallel to $K_{g}$. If $N$ is curvature-defining, then the function $\kappa_{g} = \langle K_{g}, V \rangle$ is called the \textit{(first) geodesic curvature of $\gamma$ with respect to $N$}.

Similarly, we say that $N$ is \textit{torsion-defining} if there exists a smooth unit vector field $W(N) \equiv W$ along $\gamma$ that is everywhere parallel to $T_{g}$. If $N$ is torsion-defining, then the function $\tau_{g} = \langle T_{g}, W \rangle$ is called the \textit{(first) geodesic torsion of $\gamma$ with respect to $N$}.

It is clear that both $\kappa_{g}$ and $\tau_{g}$ are defined up to a sign.

Armed with the notion of geodesic torsion, we may now define torsion. Suppose that the curvature $\kappa = \lVert D_{t}E \rVert$ of $\gamma$ is nowhere zero, so that the \textit{principal normal} $P = D_{t}E/\kappa$ is well-defined. The geodesic torsion vector of $\gamma$ with respect to $P$ is called the \textit{(first) torsion vector of $\gamma$}. In particular, if $P$ is torsion-defining, then the geodesic torsion of $\gamma$ with respect to $P$ is called the \textit{(first) torsion of $\gamma$}.

\begin{remark}
If $P$ is well-defined, then the normal curvature of $\gamma$ with respect to $P$ coincides with the curvature of $\gamma$, while the geodesic curvature with respect to $P$ vanishes.
\end{remark}

To see that our curvature functions naturally extend the classical Darboux curvatures, consider an oriented hypersurface $S$ of $M$, and let $N_{S}$ be its unit normal. If $\gamma$ is a curve on $S$, then the geodesic (resp., normal) curvature vector of $\gamma$ (with respect to $N_{S}$) is the projection onto $TS$ (resp., $NS$) of the ambient acceleration $D_{t}E$ of $\gamma$; and if $\gamma$ is not a geodesic of $M$, then the geodesic torsion vector of $\gamma$ at $\gamma(t)$ is nothing but the torsion vector of the $S$-geodesic passing from $\gamma(t)$ with tangent vector $\gamma'(t)$ \cite[p.~193]{spivak1999c}.

Yet another indication of the naturality of our definition of geodesic torsion is provided by the following lemma, which will play a key role in the proof of Theorem~\ref{mainTHM}.

\begin{lemma}
A curve on $S$ is a line of curvature if and only if its geodesic torsion vector with respect to $N_{S}$ vanishes.
\end{lemma}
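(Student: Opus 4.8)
The plan is to unwind the definition of the geodesic torsion vector and recognize $T_{g}$ as exactly the part of the shape operator applied to $\gamma'$ that is orthogonal to $\gamma'$ inside $TS$. Once this identification is made, the equivalence reduces to the elementary fact that $\gamma'$ is a principal direction of $S$ precisely when that orthogonal part vanishes, which is the very definition of a line of curvature.

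First I would record two facts about $D_{t}N_{S}$, writing $E = \gamma'$. Since $N_{S}$ has unit length along $\gamma$, differentiating $\langle N_{S}, N_{S}\rangle \equiv 1$ gives $\langle D_{t}N_{S}, N_{S}\rangle = 0$; because $S$ is a hypersurface, its normal bundle along $\gamma$ is spanned by $N_{S}$, so this orthogonality already forces $D_{t}N_{S} \in TS$. Hence $D_{t}N_{S} = -A(E)$, where $A$ denotes the shape operator of $S$, $A(X) = -D_{X}N_{S}$ for $X \in \Gamma(TS)$. (Here the hypersurface hypothesis is essential: in higher codimension $D_{t}N_{S}$ could acquire a component normal to $S$, and the argument would break down.)

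Next I would pin down the distribution $\mathcal{H}$. By definition $\mathcal{H}$ is the orthogonal complement of $E$ and $N_{S}$ in $TM\rvert_{\gamma}$; being orthogonal to $N_{S}$ it lies in $TS$, and a dimension count, $\dim \mathcal{H} = m-2 = \dim TS - 1$, shows that $\mathcal{H}$ is precisely the orthogonal complement of $\gamma'$ inside $TS$. Combining this with the previous step yields $T_{g} = -\pi_{\mathcal{H}}D_{t}N_{S} = \pi_{\mathcal{H}}A(E)$, the component of $A(E)$ orthogonal to $\gamma'$ within $TS$.

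Finally, decomposing $A(E) = \langle A(E), E\rangle E + \pi_{\mathcal{H}}A(E)$ along the orthogonal splitting $TS = \Span E \oplus \mathcal{H}$, I would conclude that $T_{g} = 0$ if and only if $A(E)$ is proportional to $E$, i.e., $\gamma'$ is an eigenvector of $A$ at every point; by definition this is exactly the condition that $\gamma$ be a line of curvature, establishing both implications at once. I do not expect a genuine obstacle: the entire content lies in the two structural observations above—that $D_{t}N_{S}$ is tangent to $S$ and that $\mathcal{H}$ is the orthogonal complement of $\gamma'$ in $TS$—after which the statement is pure linear algebra.
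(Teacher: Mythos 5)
Your proof is correct. The paper states this lemma without any proof, treating it as immediate from the definitions; your argument --- that $D_{t}N_{S} = -A(E)$ is automatically tangent to $S$ for a hypersurface, that $\mathcal{H}$ is exactly the orthogonal complement of $\gamma'$ inside $TS$, and hence that $T_{g} = \pi_{\mathcal{H}}A(E)$ vanishes precisely when $\gamma'$ is an eigenvector of the shape operator --- is exactly the reasoning the paper leaves implicit, spelled out correctly and with the right attention to where the hypersurface hypothesis enters.
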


\begin{remark}
Under suitable assumptions, one may define $m-2$ geodesic curvature and (geodesic) torsion functions. For instance, the second geodesic torsion is defined as follows. Let $\mathcal{H}_{2} = (T \oplus N \oplus T_{g})^{\perp}$, let $\pi_{\mathcal{H}_{2}}$ be the orthogonal projection onto $\mathcal{H}_{2}$, and let
\begin{equation*}
T_{g,2} = -\pi_{\mathcal{H}_{2}} D_{t}T_{g}.
\end{equation*}
If $T_{g}$ is itself torsion-defining, i.e., there exists a smooth unit vector field $W_{2}$ along $\gamma$ that is everywhere parallel to $T_{g,2}$, then the function $\tau_{g,2} = \langle T_{g,2}, W_{2} \rangle$ is called the \textit{second geodesic torsion of $\gamma$ with respect to $N$}. Higher-order geodesic torsions are defined similarly.
\end{remark}

\section{Rotating the normal}\label{RotatingNormal}
Suppose that the normal vector $N$ along $\gamma$ rotates about the curve's tangent. Then how do the curvature vectors change? The purpose of this section is to answer such question.

Let $Z$ be a rotation of $N$. Then, by definition, there exists a unit normal vector field $H(N,Z) \equiv H \in \Gamma(\mathcal{H})$ along $\gamma$ such that $N$, $Z$, and $H$ are everywhere linearly dependent; besides, there is a continuous function $\theta \colon I \to \mathbb{R}$ such that
$$Z = -\sin(\theta)H + \cos(\theta)N.$$
Denoting $Z$ by $N(\theta)$, we call the function $\theta$ the \textit{rotation angle of $N(\theta)$ with respect to $H$}.

Now, let $(H_{1}, \dotsc, H_{m-2})$ be a smooth orthonormal frame for $\mathcal{H} = (E \oplus N)^{\perp}$, with $H_{1} = H$. It follows that
\begin{equation*}
N(\theta) = -\sin(\theta) H_{1} + \cos(\theta) N,
\end{equation*}
while the vector fields
\begin{align*}
H_{1}(\theta) &= \cos(\theta)H_{1} + \sin(\theta) N,\\
H_{2}(\theta) &= H_{2},\\
&\mathrel{\makebox[\widthof{=}]{\vdots}} \\
H_{m-2}(\theta) &= H_{m-2}
\end{align*}
span $\mathcal{H}(N(\theta)) = (E \oplus N(\theta))^{\perp}$.

\begin{lemma}\label{FrameRotationLM}
The curvature vectors of $\gamma$ with respect to $N(\theta)$ are given by
\begin{align*}
		K_{g}(\theta) &= \mleft(\kappa_{g}^{1} c + \kappa_{n} s \mright) H_{1}(\theta) + \kappa_{g}^{2} H_{2}(\theta) + \dotsb + \kappa_{g}^{m-2} H_{m-2}(\theta),\\
		K_{n}(\theta) &= \mleft( -\kappa_{g}^{1} s + \kappa_{n} c \mright) N(\theta),\\
		T_{g}(\theta) &= \mleft( \theta' + \tau_{g}^{1} \mright) H_{1}(\theta) + \mleft(\tau_{g}^{2} c - \mu_{2} s \mright) H_{2}(\theta) + \dotsb + \mleft( \tau_{g}^{m-2} c - \mu_{m-2} s \mright) H_{m-2}(\theta),
\end{align*}
where $\mu_{j} = \langle D_{t} H_{j}, H_{1} \rangle$, and where $c$ and $s$ are shorthands for $\cos(\theta)$ and $\sin(\theta)$, respectively.
\end{lemma}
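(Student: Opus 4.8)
The proof is a direct computation from the structure equations
\[
D_{t}E = \sum_{j}\kappa_{g}^{j}H_{j} + \kappa_{n}N, \qquad D_{t}N = -\kappa_{n}E - \sum_{j}\tau_{g}^{j}H_{j},
\]
together with the expansion of $D_{t}H_{1}$. The plan is to obtain each rotated curvature vector by projecting the appropriate ambient derivative onto the rotated frame $(H_{1}(\theta), \dotsc, H_{m-2}(\theta))$ spanning $\mathcal{H}(N(\theta))$ and onto $N(\theta)$. Since $H_{j}(\theta) = H_{j}$ for $j \geq 2$ and $H_{1}(\theta) = cH_{1} + sN$, all the required inner products reduce to the known coefficients $\kappa_{g}^{j}$, $\kappa_{n}$, $\tau_{g}^{j}$, and $\mu_{j}$.

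For $K_{g}(\theta)$ and $K_{n}(\theta)$ the computation is immediate: one only needs $\langle D_{t}E, H_{1}(\theta)\rangle = \kappa_{g}^{1}c + \kappa_{n}s$, $\langle D_{t}E, H_{j}(\theta)\rangle = \kappa_{g}^{j}$ for $j \geq 2$, and $\langle D_{t}E, N(\theta)\rangle = -\kappa_{g}^{1}s + \kappa_{n}c$, each obtained by pairing the expansion of $D_{t}E$ against the rotated basis vectors. This yields the first two formulas at once.

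The geodesic torsion requires more care, as it is here that $\theta'$ enters. First I would differentiate $N(\theta) = -sH_{1} + cN$ by the product rule, which forces me to expand $D_{t}H_{1}$ via the structure equations; the part of $D_{t}H_{1}$ lying in $\mathcal{H}$ is handled using the antisymmetry $\langle D_{t}H_{1}, H_{k}\rangle = -\langle H_{1}, D_{t}H_{k}\rangle = -\mu_{k}$, which follows from differentiating $\langle H_{1}, H_{k}\rangle = 0$ and is the source of the $\mu_{j}$ terms. Collecting the result componentwise expresses $D_{t}N(\theta)$ as a combination of $E$, $N$, $H_{1}$, and the $H_{k}$ with $k \geq 2$.

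Finally, I would project $-D_{t}N(\theta)$ onto $\mathcal{H}(N(\theta))$. The components along $H_{j}(\theta) = H_{j}$ for $j \geq 2$ read off directly as $\tau_{g}^{j}c - \mu_{j}s$. The main (and only genuine) obstacle is the $H_{1}(\theta)$-component: since $H_{1}(\theta) = cH_{1} + sN$ mixes the $H_{1}$ and $N$ directions, pairing $-D_{t}N(\theta)$ against it combines the $H_{1}$-coefficient and the $N$-coefficient of $D_{t}N(\theta)$, and the cross terms collapse via $c^{2} + s^{2} = 1$ to produce precisely $\theta' + \tau_{g}^{1}$. Verifying that this collapse is exact — rather than leaving residual $c$, $s$ factors — is the step that demands attention, but it follows from a short explicit calculation.
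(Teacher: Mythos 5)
Your proposal is correct: the paper actually states Lemma~\ref{FrameRotationLM} without proof, treating it as a routine verification, and your direct computation (projecting $D_{t}E$ and $-D_{t}N(\theta)$ onto the rotated frame, using $\langle D_{t}H_{1}, H_{k}\rangle = -\mu_{k}$ and the $c^{2}+s^{2}=1$ collapse in the $H_{1}(\theta)$-component) is exactly the intended argument and checks out sign for sign. Nothing is missing.
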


\section{Proof of Theorem~\ref{mainTHM}}\label{ProofTheorem}

Here we prove our most general result, Theorem~\ref{mainTHM} in the introduction.

To begin with, suppose that $\gamma$ is a line of curvature of $S$ and that $N_{S}(\theta)$ is a parallel rotation of $N_{S}$. Then the geodesic torsion of $\gamma$ with respect to $N_{S}$ vanishes and the vector field $H(N_{S}, N_{S}(\theta))$ is parallel in $\mathcal{H}$.

Let $(H_{1}, \dotsc, H_{m-2})$ be a smooth orthonormal frame for $(E \oplus N_{S})^{\perp}$ such that $H_{1} = H$. Applying Lemma~\ref{FrameRotationLM}, we deduce that $T_{g}(\theta) = \theta' H_{1}(\theta)$, which implies that $N_{S}(\theta)$ is torsion-defining and that $\theta' = \pm \tau_{g}(\theta)$.

Since
\begin{equation*}
\int_{0}^{\ell} \theta' \, dt = \theta(\ell) - \theta(0),
\end{equation*}
it follows that, when $N_{S}(\theta)$ is a closed rotation of $N_{S}$,
\begin{equation*}
\int_{0}^{\ell} \tau_{g}(\theta) = 2n\pi \quad\text{for some $n \in \mathbb{Z}$},
\end{equation*}
as desired.

Conversely, given any (torsion-defining) unit normal vector field $N$ along $\gamma$, suppose that $W(N) \equiv W$ is parallel in $\mathcal{H}$. Choose an orthonormal frame $(H_{1}, \dotsc, H_{m-2})$ for $\mathcal{H}$, with $H_{1} = W$, and let 
\begin{align*}
	N(\theta) &= -\sin(\theta)H_{1} + \cos(\theta)N,\\  
	H_{1}(\theta) &= \cos(\theta)H_{1} + \sin(\theta)N,
\end{align*}
where
\begin{equation} \label{AngleFromGeodesicTorsionEQ}
\theta(t) = -\int_{0}^{t} \tau_{g}(s) \, ds.
\end{equation}
(Note that $N(\theta)$ is a parallel rotation of $N$.) 

Define a map $\sigma \colon [0, \ell] \times \mathbb{R}^{m-1} \to M$ by
\begin{equation*}
\sigma(t, u) = \exp_{\gamma(t)}\mleft(u^{1}H_{1}(\theta)(t) + u^{2}H_{2}(t) + \dotsb + u^{m-2} H_{m-2}(t)\mright).
\end{equation*}
It is clear that $\sigma$ is a smooth immersion in a neighborhood of $[0, \ell] \times \{0 \}$; besides, its image is normal to $N(\theta)$ along $\gamma$.

It remains to show that $\gamma$ is a line of curvature of $\sigma$, i.e., that the geodesic torsion $\tau_{g}(\theta)$ of $\gamma$ with respect to $N(\theta)$ vanishes. Differentiating \eqref{AngleFromGeodesicTorsionEQ}, we have
\begin{equation*}
\theta' = -\tau_{g}^{1},
\end{equation*}
which implies $\tau_{g}^{1}(\theta) = 0$, as desired. Since $\tau_{g}^{2} = \dotsb = \tau_{g}^{m-1} = 0$ and $H_{1}$ is parallel in $\mathcal{H}$, we conclude that $\tau_{g}(\theta) = 0$ by Lemma~\ref{FrameRotationLM}.

\section{Three-dimensional curves}\label{3DCurves}

Let $\gamma \colon I \to M$ be a Frenet curve, let $H_{1} = W(P)$, and let $(H_{2}, \dotsc, H_{m-2})$ be a parallel frame for the orthogonal complement of $H_{1}$ in $\mathcal{H}(P)$.

\begin{definition}\label{3DcurveDEF}
We say that $\gamma$ is \textit{three-dimensional} if the following equations hold:
\begin{align*}
D_{t}E &= \kappa P,\\
D_{t}H_{1} &= \tau P,\\
D_{t}H_{2} &= \dotsb = D_{t}H_{m-2} =0,\\
D_{t}P &= -\kappa E -\tau H_{1}.
\end{align*}
\end{definition}
It is clear that $\gamma$ is three-dimensional if and only if $W(P)$ is parallel in $\mathcal{H}(P)$.

The purpose of this section is to prove Theorem~\ref{mainCOR} in the introduction.

\begin{proof}[Proof of Theorem~\textup{\ref{mainCOR}}]
Suppose that $P$ is a parallel rotation of $N_{S}$, and let $\theta$ be the rotation angle of $P$ with respect to $W(P)$. We know from the proof of Theorem~\ref{mainTHM} that if $\gamma$ is a line of curvature and $P$ is a \emph{closed} rotation, then 
\begin{equation}\label{subintegralEQ}
\pm\int_{0}^{\ell} \tau \, dt =  \theta(\ell) - \theta(0) = 2n\pi \quad \text{for some $n \in \mathbb{Z}.$}
\end{equation}

On the other hand, applying Lemma~\ref{FrameRotationLM}, we observe that the normal curvature of $\gamma$ with respect to $N_{S}$ is related to the curvature $\kappa$ by the relation
\begin{equation*}
\kappa_{n} = \kappa(\theta) = \kappa \cos(\theta).
\end{equation*}

Suppose that $M$ is convex, so that $\kappa_{n} > 0$. Since $\kappa >0$, we have $\cos(\theta) > 0$, from which we conclude that
\begin{equation*}
\theta(\ell) - \theta(0) \in (-\pi, \pi).
\end{equation*}
Together with \eqref{subintegralEQ}, this implies $n =0$, as desired.
\end{proof}

\bibliographystyle{amsplain}
\bibliography{biblio}
\end{document}